\providecommand{\U}[1]{\protect\rule{.1in}{.1in}}
\newtheorem{theorem}{Theorem}[section]
\theoremstyle{plain}
\newtheorem{proposition}{Proposition}[section]
\numberwithin{equation}{section}
\begin{document}
\title[Rigidity]{Rigidity for general semiconvex entire solutions to the sigma-2 equation}
\author{Ravi Shankar}
\author{Yu YUAN}
\address{University of Washington\\
Department of Mathematics, Box 354350\\
Seattle, WA 98195}
\email{shankarr@uw.edu, yuan@math.washington.edu}
\thanks{Both authors are partially supported by NSF grants.}

\begin{abstract}
We show that every general semiconvex entire solution to the sigma-2 equation
is a quadratic polynomial. A decade ago, this result was shown for almost
convex solutions.

\end{abstract}
\date{\today }
\maketitle

\section{Introduction}

\label{sec:Intro}

In this paper, we show that every general semiconvex entire solution in
$\mathbb{R}^{n}$ to the Hessian equation
\[
\sigma_{k}\left(  D^{2}u\right)  =\sigma_{k}\left(  \lambda\right)
=\sum_{1\leq i_{1}<\cdots<i_{k}\leq n}\lambda_{i_{1}}\cdots\lambda_{i_{k}}=1
\]
with $k=2$ must be quadratic. Here $\lambda_{i}s$ are the eigenvalues of the
Hessian $D^{2}u.$

\begin{theorem}
\label{thm:Liou} Let $u$ be a smooth semiconvex solution to $\sigma_{2}\left(
D^{2}u\right)  =1$ on $\mathbb{R}^{n}$ with $D^{2}u\geq-K~I$ for a large
$K>0.$ Then $u$ is quadratic.
\end{theorem}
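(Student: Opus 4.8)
The plan is to reduce the theorem to a scale-invariant interior Hessian estimate for semiconvex $\sigma_2$-solutions, and then feed that estimate into a blow-down (rescaling) argument. A preliminary reduction: the hypersurface $\{\sigma_2=1\}\subset\mathbb R^n$ is separated by $\{\sigma_1=0\}$ into exactly two pieces, namely $\{\sigma_2=1\}\cap\Gamma_2$ (the component of $\{\sigma_2>0\}$ containing the positive cone, where $\sigma_1>0$) and its reflection $\{\sigma_2=1,\ \sigma_1<0\}$; by continuity $D^2u$ stays entirely in one of them. On the second piece, the bound $D^2u\ge -KI$ together with $\sigma_2=1$ forces the eigenvalues of $D^2u$ into a bounded set, so $-u$ is an entire solution with \emph{bounded} Hessian lying in $\Gamma_2$, and then the final step below already forces $-u$, hence $u$, to be quadratic. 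Thus we may assume $D^2u\in\Gamma_2$, where the equation $F(D^2u):=\sigma_2^{1/2}(D^2u)=1$ is elliptic, $F$ is \emph{concave}, $F^{ij}=\tfrac12\sigma_2^{-1/2}\,\sigma_2^{ij}$ with the Newton tensor $\sigma_2^{ij}=\sigma_1\delta_{ij}-u_{ij}$ positive definite and \emph{divergence free}, so that $L:=\sigma_2^{ij}\partial_{ij}=\partial_i(\sigma_2^{ij}\partial_j)$ is in divergence form and, up to a conformal factor, is the Laplace--Beltrami operator $\Delta_g$ of a natural metric $g$ on the gradient graph $M=\{(x,Du(x))\}$.

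\textbf{Jacobi inequality (the crux).} The technical heart is to establish a Jacobi-type differential inequality for the logarithmic slope $b=\log\sqrt{1+\lambda_{\max}^2}$, a Lipschitz function on $M$ (smooth where $\lambda_{\max}$ is a simple eigenvalue):
\[
\Delta_g b\ \ge\ c(n)\,|\nabla_g b|^2
\]
in the viscosity/distributional sense, possibly after correcting $b$ or $g$ by lower-order terms controlled by $K$. One gets this by differentiating $F(D^2u)=1$ twice, using the concavity of $F$ and the Codazzi-type commutation identities, and --- the new point beyond the earlier almost-convex results --- invoking $D^2u\ge -KI$ to dominate the third-order terms coming from the eigendirections with large negative eigenvalue. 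I expect this to be the main obstacle: for almost-convex solutions the admissible lower Hessian bound was exactly what made those error terms have a favorable sign, and for general $K$ one must control directions as negative as $-K$ while the remaining eigenvalues are forced to be large by $\sigma_2=1$, which requires using the divergence-form / Newton-tensor structure with care.

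\textbf{Interior Hessian estimate.} Given the Jacobi inequality, $e^{cb}$ (or $\max(b-\ell,0)$ at a level $\ell$) is a nonnegative subsolution for $L$; testing against cutoffs and running Moser iteration for the divergence-form operator $L$ --- equivalently a Sobolev inequality on $M$ --- together with the integration-by-parts control of $\int_{B_{2R}}\Delta u$ (and hence of the relevant volumes) by $\operatorname{osc}_{B_{2R}}u$ coming from the semiconvexity, yields an estimate of the form
\[
\sup_{B_R(0)}|D^2u|\ \le\ C(n,K)\exp\!\Big[\,C(n,K)\,\big(R^{-2}\operatorname{osc}_{B_{2R}(0)}u\big)^{p}\,\Big]
\]
for some $p=p(n)$; this is in the spirit of the authors' earlier interior estimate for semiconvex $\sigma_2$-solutions.

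\textbf{Rigidity via blow-down.} Rescaling, $u^{(\rho)}(x)=\rho^{-2}u(\rho x)$ solves the same equation with the same bound $D^2u^{(\rho)}\ge -KI$, stays in $\Gamma_2$, and satisfies $\operatorname{osc}_{B_2}u^{(\rho)}=\rho^{-2}\operatorname{osc}_{B_{2\rho}}u$. A separate lemma establishes the at-most-quadratic growth $\operatorname{osc}_{B_{2\rho}}u\le C(n,K)\rho^2$ --- here, beyond ellipticity, one uses that $u+\tfrac K2|x|^2$ is convex, via comparison with explicit quadratic solutions of $\sigma_2=1$ or via its Legendre transform. Then $\operatorname{osc}_{B_2}u^{(\rho)}$ is bounded uniformly in $\rho$, so the Hessian estimate gives $\sup_{B_\rho}|D^2u|=\sup_{B_1}|D^2u^{(\rho)}|\le C_1(n,K)$ for all $\rho$; hence $D^2u$ is globally bounded. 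On the resulting compact subset of $\Gamma_2$ the equation is uniformly elliptic with $F=\sigma_2^{1/2}$ concave, so the Evans--Krylov interior estimate gives $[D^2u]_{C^{\alpha}(B_R)}\le C(n,K,C_1)R^{-\alpha}\to 0$ as $R\to\infty$, forcing $D^2u$ to be constant. Therefore $u$ is quadratic. (The quadratic-growth lemma is the other point where semiconvexity, not merely ellipticity, is needed.)
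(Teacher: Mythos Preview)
Your outline correctly isolates the interior Hessian estimate (essentially [SY]) and the final Evans--Krylov scaling step. The genuine gap is the ``separate lemma'' asserting at-most-quadratic growth, $\operatorname{osc}_{B_{2\rho}}u\le C(n,K)\rho^{2}$. Neither of your suggested justifications works. Semiconvexity (convexity of $u+\tfrac{K}{2}|x|^{2}$) gives only the \emph{lower} bound $u(x)\ge u(0)+Du(0)\cdot x-\tfrac{K}{2}|x|^{2}$; the upper bound is the hard direction. Comparison with explicit quadratic solutions on balls requires control of $u$ on $\partial B_{R}$, which is precisely what you are trying to establish, so the argument is circular. And under the Legendre transform the dual potential $w$ does satisfy $0<D^{2}w<I$, but its smallest eigenvalue $\mu_{1}$ may approach $0$ (equivalently $\lambda_{\max}(D^{2}u)\to\infty$), so $w$ need not grow quadratically from below and hence $u$ need not be bounded quadratically from above. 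This growth condition is exactly the extra hypothesis assumed in [BCGJ] and [CX]; dispensing with it is the content of the theorem, not a preliminary lemma.

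The paper avoids this by never bounding $D^{2}u$ directly. It works entirely on the Legendre--Lewy side, where $0<D^{2}w\le I$ is automatic, and proves a local $C^{2,\alpha}$ estimate for $w$; scaling then forces $D^{2}w$ (hence $D^{2}u$) to be constant. The obstruction on that side is that the equation for $w$ is \emph{saddle}, so Evans--Krylov fails. Two new ingredients overcome this: first, a \emph{shifted} trace Jacobi inequality $\Delta_{F}\ln(\Delta u+J)\ge\tfrac13|\nabla_{F}\ln(\Delta u+J)|^{2}$ valid \emph{everywhere} (the $\ln\lambda_{\max}$ inequality you invoke holds in [SY] only where $\lambda_{\max}$ is large, which suffices for an interior estimate but not here), which under the transform becomes superharmonicity of $\big(\sigma_{n}(\mu)/\sigma_{n-1}(\mu)\big)^{1/3}$ for the linearized $w$-operator; second, the algebraic rewriting of the $w$-equation as $\sigma_{n-1}(\mu)/\sigma_{n-2}(\mu)=\big(A_{1}-A_{2}\,\sigma_{n}(\mu)/\sigma_{n-1}(\mu)\big)^{-1}$, so that weak-Harnack concentration of the superharmonic quotient forces concentration of the \emph{concave, uniformly elliptic} lower quotient $\sigma_{n-1}/\sigma_{n-2}$, to which the Caffarelli--Yuan iteration and Evans--Krylov--Safonov then apply.
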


Recall the classical Liouville theorem for the Laplace equation $\sigma
_{1}\left(  D^{2}u\right)  =\bigtriangleup u=1$ or
J\"{o}rgens-Calabi-Pogorelov theorem for the Monge-Amp\`{e}re equation
$\sigma_{n}\left(  D^{2}u\right)  =\det D^{2}u=1:$ all convex entire solutions
to those equations must be quadratic. Theorem 1.1 has been settled under an
almost convexity condition $D^{2}u\geq\left(  \delta-\sqrt{2/\left[  n\left(
n-1\right)  \right]  }\right)  I$ for general dimension in the joint work with
Chang [ChY]; and under the general semiconvexity condition $D^{2}u\geq-KI$ in
three dimensions by taking advantage of the special Lagrangian form of the
equation in this case [Y]. Assuming a super quadratic growth condition,
Bao-Chen-Ji-Guan [BCJG] demonstrated that all convex entire solutions to
$\sigma_{k}\left(  D^{2}u\right)  =1$ with $k=1,2,\cdots,n$ are quadratic
polynomials; and Chen-Xiang [CX] showed that all \textquotedblleft super
quadratic\textquotedblright\ entire solutions to $\sigma_{2}\left(
D^{2}u\right)  =1$ with $\sigma_{1}\left(  D^{2}u\right)  >0$ and $\sigma
_{3}\left(  D^{2}u\right)  \geq-K$ are also quadratic polynomials. Warren's
rare saddle entire solutions for the $\sigma_{2}\left(  D^{2}u\right)  =1$
case [W] confirm the necessity of the semiconvexity assumption. It was
\textquotedblleft guessed\textquotedblright\ in the 2009 paper [ChY] that
Theorem 1.1 should hold true.

The equation $\sigma_{2}(\kappa)=1$ prescribes the intrinsic scalar curvature
of a Euclidean hypersurface $(x,u(x))$ in $\mathbb{R}^{n}\times\mathbb{R}^{1}$
with extrinsic principal curvatures $\kappa=\left(  \kappa_{1},\cdots
,\kappa_{n}\right)  .$ The $\sigma_{2}$ function of the Schouten tensor arises
in conformal geometry, and complex $\sigma_{2}$-type equations arise from the
Strominger system in string theory.

Our current work, as well the previous ones [ChY] [Y], has been inspired by
Nitsche's classical paper [N], where the Legendre-Lewy transform was employed
to produced an elementary proof of J\"{o}rgens' rigidity for the two
dimensional Monge-Amp\`{e}re equation, and in turn, Bernstein's rigidity for
the two dimensional minimal surface equation.

The Legendre-Lewy transform of a general semiconvex solution satisfies a
uniformly elliptic, saddle equation. In the almost convex case [ChY], the new
equation becomes concave, thus Evans-Krylov-Safonov theory yields the
constancy of the bounded new Hessian, and in turn, the old one. To beat the
saddle case, one has to be \textquotedblleft lucky\textquotedblright. Recall
that, in general Evans-Krylov-Safonov fails as shown by the saddle
counterexamples of Nadirashvili-Vl\u{a}du\c{t} [NV]. Our earlier trace Jacobi
inequality, as an alternative log-convex vehicle, other than the maximum
eigenvalue Jacobi inequality, in deriving the Hessian estimates for general
semiconvex solutions in [SY], could rescue the saddleness. But the trace
Jacobi only holds for large enough trace of the Hessian. It turns out that the
trace added by a large enough constant satisfies the elusive Jacobi inequality
(Proposition 2.1)

Equivalently, the reciprocal of the shifted trace Jacobi quantity is
superharmonic, and it remains so in the new vertical coordinates under the
Legendre-Lewy transformation by a transformation rule (Proposition 2.2). Then
the iteration arguments developed in the joint work with Caffarelli [CaY] show
the \textquotedblleft vertical\textquotedblright\ solution is close to a
\textquotedblleft harmonic\textquotedblright\ quadratic at one small scale
(Proposition 3.1, two steps in the execution: the superharmonic quantity
concentrates to a constant in measure by applying Krylov-Safonov's weak
Harnack; a variant of the superharmonic quantity, as a quotient of symmetric
Hessian functions of the new potential, is very pleasantly concave and
uniformly elliptic, consequently, closeness to a \textquotedblleft
harmonic\textquotedblright\ quadratic is possible by Evans-Krylov-Safonov
theory), and the closeness improves increasingly as we rescale (this is a
self-improving feature of elliptic equations, no concavity/convexity needed).
Thus a H\"{o}lder estimate for the bounded Hessian is realized, and
consequently so is the constancy of the new and then the old Hessian. See
Section 3.

In closing, we remark that, in three dimensions, our proof provides a
\textquotedblleft pure\textquotedblright\ PDE way to establish the rigidity,
distinct from the geometric measure theory way used in the earlier work on the
rigidity for special Lagrangian equations [Y, Theorem 1.3].

\bigskip

\section{Shifted trace Jacobi inequality and superharmonicity under
Legendre-Lewy transform}

Taking the gradient of both sides of the quadratic Hessian equation%
\begin{equation}
F\left(  D^{2}u\right)  =\sigma_{2}\left(  \lambda\right)  =\frac{1}{2}\left[
\left(  \bigtriangleup u\right)  ^{2}-\left\vert D^{2}u\right\vert
^{2}\right]  =1, \label{Esigm2}%
\end{equation}
we have%
\begin{equation}
\bigtriangleup_{F}Du=0, \label{Egradient}%
\end{equation}
where the linearized operator is given by
\begin{equation}
\bigtriangleup_{F}=\sum_{i,j=1}^{n}F_{ij}\partial_{ij}=\sum_{i,j=1}%
^{n}\partial_{i}\left(  F_{ij}\partial_{j}\right)  ,\ \ \label{lin}%
\end{equation}
with%
\begin{equation}
\left(  F_{ij}\right)  =\bigtriangleup u\ I-D^{2}u=\sqrt{2+\left\vert
D^{2}u\right\vert ^{2}}\ I-D^{2}u>0. \label{F_ij}%
\end{equation}
Here without loss of generality, we assume $\bigtriangleup u>0$ in the
remaining. Otherwise the smooth Hessian $D^{2}u$ would be in the
$\bigtriangleup u<0$ branch of the equation (\ref{Esigm2}). Given the
semiconvexity condition, the conclusion in Theorem \ref{thm:Liou} would be
straightforward by Evans-Krylov-Safonov.

The gradient square $\left\vert \nabla_{F}v\right\vert ^{2}$ for any smooth
function $v$ with respect to the inverse \textquotedblleft
metric\textquotedblright\ $\left(  F_{ij}\right)  $ is defined as%
\[
\left\vert \nabla_{F}v\right\vert ^{2}=\sum_{i,j=1}^{n}F_{ij}\partial
_{i}v\partial_{j}v.
\]

\subsection{Shifted trace Jacobi inequality}

\begin{proposition}
Let $u$ be a smooth solution to $\sigma_{2}\left(  \lambda\right)  =1\ $with
$D^{2}u\geq-KI.$ Set $b=\ln\left(  \bigtriangleup u+J\right)  .$ Then we have%
\begin{equation}
\bigtriangleup_{F}b\geq\varepsilon\left\vert \nabla_{F}b\right\vert ^{2}
\label{Jac}%
\end{equation}
for $J=8nK/3$ and $\ \varepsilon=1/3.$
\end{proposition}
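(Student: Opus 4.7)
The plan is to reduce the logarithmic Jacobi inequality (\ref{Jac}) to a pointwise algebraic inequality in the third derivatives of $u$. Setting $w = \bigtriangleup u + J$, which is positive by semiconvexity since $J > nK$, the identities $\bigtriangleup_F b = \bigtriangleup_F w/w - |\nabla_F w|^2/w^2$ and $|\nabla_F b|^2 = |\nabla_F w|^2/w^2$ convert (\ref{Jac}) into the equivalent ``unlogged'' form
\[
w\,\bigtriangleup_F w \ \geq \ (1+\varepsilon)\,|\nabla_F w|^2 \ = \ \tfrac{4}{3}\,|\nabla_F w|^2.
\]
I would then compute $\bigtriangleup_F(\bigtriangleup u) = |D^3 u|^2 - |\nabla\bigtriangleup u|^2$ by differentiating $F(D^2 u)=1$ twice, tracing over the direction of the second derivative, and substituting the explicit Hessian $F_{ij,pq} = \delta_{ij}\delta_{pq} - \tfrac{1}{2}(\delta_{ip}\delta_{jq}+\delta_{iq}\delta_{jp})$ of $\sigma_2$.

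Next, I would fix a point and choose coordinates diagonalizing $D^2 u$ with eigenvalues $\lambda_1,\ldots,\lambda_n$, writing $s := \bigtriangleup u$, so that $F_{ii} = s - \lambda_i > 0$. The first-order identity $F_{ij}u_{ijk}=0$ from $\partial_k\sigma_2=0$ becomes $\sum_i (s-\lambda_i)u_{iik} = 0$; solving for $u_{kkk}$ and substituting into $\partial_k s = \sum_i u_{iik}$ yields the crucial identity
\[
(s-\lambda_k)\,\partial_k s \ = \ \sum_{i\neq k}(\lambda_i-\lambda_k)\,u_{iik}.
\]
This expresses $\partial_k s$, which controls $|\nabla_F w|^2$, purely in terms of the ``off-diagonal'' third derivatives $u_{iik}$ with $i\neq k$. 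Each such entry appears three times in $|D^3 u|^2$ via the symmetry $u_{iik}=u_{iki}=u_{kii}$, and this combinatorial multiplicity on the LHS is what makes the inequality stand a chance of closing.

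The final step is to apply Cauchy--Schwarz to the identity above and combine it with $|D^3 u|^2 \geq 3\sum_k\sum_{i\neq k} u_{iik}^2$, reducing the target to a pointwise inequality in $(s,\lambda_1,\ldots,\lambda_n)$ under only the constraint $\lambda_i \geq -K$. I expect the hard part to be the regime where some $\lambda_k$ is close to $-K$: the Cauchy--Schwarz weight $\sum_{i\neq k}(\lambda_i-\lambda_k)^2$ can then be of order $(s+K)^2 + (n-1)K^2$, which can overwhelm the factor-of-three gain, especially in high dimension where a naive term-by-term-in-$k$ comparison need not suffice. The shift $J = 8nK/3$ should be calibrated precisely to absorb this imbalance, while $1+\varepsilon = 4/3$ is the largest exponent the combinatorial slack can afford. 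If unweighted termwise estimation proves too lossy, I would fall back to a weighted Cauchy--Schwarz with weights of the form $s-\lambda_i$ or $s+J-\lambda_i$, coupling the contributions across $k$ so as to close the inequality globally.
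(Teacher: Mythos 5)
Your overall reduction tracks the paper's Step~1 closely: unlog the inequality, compute $\bigtriangleup_{F}(\bigtriangleup u)$ by differentiating the equation twice and using the $\sigma_{2}$ Hessian, diagonalize $D^{2}u$ at a point, invoke the linearized constraint $\sum_{i}(\sigma_{1}-\lambda_{i})u_{iik}=0$, and sort $\sum_{i,j,k}u_{ijk}^{2}$ by index multiplicity. The gap is in the bound $\sum_{i,j,k}u_{ijk}^{2}\geq 3\sum_{k}\sum_{i\neq k}u_{iik}^{2}$: by discarding the diagonal cubes $u_{kkk}^{2}$ you throw away the term that actually closes the inequality. The paper instead keeps the regrouping $\sum_{i,j,k}u_{ijk}^{2}\geq 3\sum_{i\neq j}u_{jji}^{2}+\sum_{i}u_{iii}^{2}$ and then analyzes, for each fixed $k$, the constrained quadratic form $3\sum_{i\neq k}t_{i}^{2}+t_{k}^{2}-\eta_{k}\bigl(\sum_{i}t_{i}\bigr)^{2}$ on the hyperplane $\sum_{i}(\sigma_{1}-\lambda_{i})t_{i}=0$ via a $2\times 2$ trace--determinant computation; the coefficient $1$ in front of $t_{k}^{2}=u_{kkk}^{2}$ is not optional.

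To see that your version fails, take $n=100$, $\lambda_{1}=\lambda_{2}=-K$, $\lambda_{3}=\cdots=\lambda_{100}=M$ with $M$ fixed by $\sigma_{2}=1$; for large $K$ one finds $M\approx 0.035K$, $\sigma_{1}\approx 1.46K>0$, $f_{1}=\sigma_{1}+K\approx 2.46K$, all admissible. With $k=1$, choose $t_{i}=u_{ii1}=\lambda_{i}-\lambda_{1}$ for $i\neq 1$ (the Cauchy--Schwarz extremizer) and let $t_{1}=u_{111}$ be forced by the constraint; then $t_{1}\approx-58.8K$, $\partial_{1}\sigma_{1}=\sum_{i}t_{i}\approx 42.7K$, so $(\partial_{1}\sigma_{1})^{2}\approx 1.8\times 10^{3}K^{2}$ while $3\sum_{i\neq 1}t_{i}^{2}\approx 3.2\times 10^{2}K^{2}$. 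Thus even the bare inequality $(\partial_{1}\sigma_{1})^{2}\leq 3\sum_{i\neq 1}u_{ii1}^{2}$ fails by nearly a factor of six, for any $J$ and $\varepsilon\geq 0$. The discarded $u_{111}^{2}\approx 3.5\times 10^{3}K^{2}$ is by far the dominant term and is exactly what rescues the paper's version. Your proposed fallback to a weighted Cauchy--Schwarz cannot help: in this example Cauchy--Schwarz on $(\sigma_{1}-\lambda_{k})\partial_{k}\sigma_{1}=\sum_{i\neq k}(\lambda_{i}-\lambda_{k})u_{iik}$ is already an equality, and any weighting that keeps the budget $3\sum_{i\neq k}t_{i}^{2}$ is at least as weak as the unweighted estimate. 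Moreover the $u_{iik}$ families are disjoint across $k$, so there is no ``coupling across $k$'' to exploit. The missing idea is to retain $u_{kkk}^{2}$ and carry out the rank-two constrained eigenvalue analysis, which is precisely where the calibrations $1+\varepsilon=4/3$ and $J=8nK/3$ enter in the paper.
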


\begin{proof}
\textit{Step 1. Differentiation of the trace}

We derive the following formulas for function $b=\ln\left(  \sigma
_{1}+J\right)  =\ln\left(  \bigtriangleup u+J\right)  :$%
\begin{equation}
\left\vert \nabla_{F}b\right\vert ^{2}=\sum_{i}f_{i}\frac{\left(
\bigtriangleup u_{i}\right)  ^{2}}{\left(  \sigma_{1}+J\right)  ^{2}}
\label{gradientb}%
\end{equation}
and%
\begin{gather}
\bigtriangleup_{F}b=\label{lapb}\\
\frac{1}{\left(  \sigma_{1}+J\right)  }\left\{  6\sum_{i>j>k}u_{ijk}%
^{2}+\left[  3\sum_{i\neq j}u_{jji}^{2}+\sum_{i}u_{iii}^{2}-\sum_{i}\left(
1+\frac{f_{i}}{\sigma_{1}+J}\right)  \left(  \bigtriangleup u_{i}\right)
^{2}\right]  \right\} \nonumber
\end{gather}
at $x=p,$ where, without loss of generality, $D^{2}u\left(  p\right)  $ is
assumed to be diagonalized and $f\left(  \lambda\right)  =\sigma_{2}\left(
\lambda\right)  .$

Noticing (\ref{F_ij}), it is straightforward to have the identity
(\ref{gradientb}) and at $p$
\begin{equation}
\bigtriangleup_{F}b=\sum_{i=1}^{n}f_{i}\left[  \frac{\partial_{ii}%
\bigtriangleup u}{\left(  \sigma_{1}+{\small J}\right)  }-\frac{\left(
\partial_{i}\bigtriangleup u\right)  ^{2}}{\left(  \sigma_{1}+{\scriptsize J}%
\right)  ^{2}}\right]  . \label{E4thorder}%
\end{equation}

Next we substitute the fourth order derivative terms $\partial_{ii}%
\bigtriangleup u=\sum_{k=1}^{n}\partial_{ii}u_{kk}$ in the above by lower
order derivative terms. Differentiating equation (\ref{Egradient})
$\sum_{i,j=1}^{n}F_{ij}\partial_{ij}u_{k}=0$ and using (\ref{F_ij}),\ we
obtain at $p$
\begin{align*}
\sum_{i=1}^{n}f_{i}\partial_{ii}\bigtriangleup u  &  =\bigtriangleup_{F}%
u_{kk}=\sum_{i,j=1}^{n}F_{ij}\partial_{ij}u_{kk}=\sum_{i,j=1}^{n}-\partial
_{k}F_{ij}\partial_{ij}u_{k}\\
&  =\sum_{i,j=1}^{n}-\left(  \bigtriangleup u_{k}\ \delta_{ij}-u_{kij}\right)
u_{kij}=\sum_{i,j=1}^{n}\left[  u_{ijk}^{2}-\left(  \bigtriangleup
u_{k}\right)  ^{2}\right]  .
\end{align*}
Plugging the above identity in (\ref{E4thorder}), we have at $p$%
\[
\hspace*{-0.5in}\bigtriangleup_{F}b=\frac{1}{\left(  \sigma_{1}+J\right)
}\left[  \sum_{i,j,k=1}^{n}u_{ijk}^{2}-\sum_{k=1}^{n}\left(  \bigtriangleup
u_{k}\right)  ^{2}-\sum_{i=1}^{n}\frac{f_{i}\ \ \left(  \bigtriangleup
u_{i}\right)  ^{2}}{\sigma_{1}+J}\right]
\]
Regrouping those terms $u_{\heartsuit\spadesuit\clubsuit},$ $u_{\spadesuit
\spadesuit\heartsuit},$ $u_{\heartsuit\heartsuit\heartsuit},$ and
$\bigtriangleup u_{\heartsuit}$ in the last two expressions, we obtain
(\ref{lapb}).

Subtracting (\ref{gradientb})$\ast\varepsilon$ from (\ref{lapb}), we have%
\[
\left(  \bigtriangleup_{F}b-\varepsilon\left\vert \nabla_{F}b\right\vert
^{2}\right)  \left(  \sigma_{1}+J\right)  \geq3\sum_{i\neq j}u_{jji}^{2}%
+\sum_{i}u_{iii}^{2}-\sum_{i}\left(  1+\delta\frac{f_{i}}{\sigma_{1}%
+J}\right)  \left(  \bigtriangleup u_{i}\right)  ^{2}%
\]
with $\delta=1+\varepsilon.$

Fix $i$ and denote $t=\left(  u_{11i},\cdots,u_{nni}\right)  $ and $e_{i}$ the
$i^{\prime}$th basis vector in $\mathbb{R}^{n},$ then the $i^{\prime}$th term
above can be written as%
\begin{equation}
Q=3\left\vert t\right\vert ^{2}-2\left\langle e_{i},t\right\rangle
^{2}-\left(  1+\delta\frac{f_{i}}{\sigma_{1}+J}\right)  \left\langle \left(
1,\cdots,1\right)  ,t\right\rangle ^{2} \label{Q}%
\end{equation}

\textit{Step 2.\ Tangential projection}

Equation (\ref{Egradient}) at $p$ yields that $t$ is tangential to the level
set of the equation $\sigma_{2}\left(  \lambda\right)  =1,\ $ $\left\langle
Df,t\right\rangle =0.$ Then by projecting $e_{i}$ and $\left(  1,\cdots
,1\right)  $ to the tangential space,%
\[
E=\left(  e_{i}\right)  _{T}=e_{i}-\frac{f_{i}}{\left\vert Df\right\vert ^{2}%
}Df\ \ \text{and \ }L=\left(  1,\cdots,1\right)  _{T}=\left(  1,\cdots
,1\right)  -\frac{\left(  n-1\right)  \sigma_{1}}{\left\vert Df\right\vert
^{2}}Df.
\]
The coefficients of the two negative terms in the quadratic form (\ref{Q})%
\[
Q=3\left\vert t\right\vert ^{2}-2\left\langle E,t\right\rangle ^{2}-\left(
1+\delta\frac{f_{i}}{\sigma_{1}+J}\right)  \left\langle L,t\right\rangle ^{2}%
\]
decrease, as simple symmetric computation shows
\begin{gather}
\left\vert E\right\vert ^{2}=1-\frac{f_{i}^{2}}{\left\vert Df\right\vert ^{2}%
}<1,\ \left\vert L\right\vert ^{2}=1-\frac{2\left(  n-1\right)  }{\left\vert
Df\right\vert ^{2}}<1,\ \ \label{CoefReduced}\\
\text{and }E\cdot L=1-\frac{\left(  n-1\right)  \sigma_{1}f_{i}}{\left\vert
Df\right\vert ^{2}}.\nonumber
\end{gather}

\textit{Step 3.\ Two anisotropic and non-orthogonal directions}

We proceed to show that the quadratic form $Q$ is positive definite. When $t$
is perpendicular to both $E$ and $L,$ $Q=3\left\vert t\right\vert ^{2}\geq0.$
So we only need to deal with the anisotropic case, when $t$ is along $\left\{
E,L\right\}  $-space. The corresponding matrix of the quadratic form $Q$ is%
\[
Q=3I-2E\otimes E-\eta L\otimes L
\]
with $\eta=1+\delta\frac{f_{i}}{\sigma_{1}+J}=1+\left(  1+\varepsilon\right)
\frac{f_{i}}{\sigma_{1}+J}.$ The \textit{real} $\xi$-eigenvector equation for
(symmetric) $Q$ under \textit{non-orthogonal} basis $\left\{  E,L\right\}  $
is%
\[
\left(
\begin{array}
[c]{cc}%
3-2\left\vert E\right\vert ^{2} & -2E\cdot L\\
-\eta L\cdot E & 3-\eta\left\vert L\right\vert ^{2}%
\end{array}
\right)  \left(
\begin{array}
[c]{c}%
\alpha\\
\beta
\end{array}
\right)  =\xi\left(
\begin{array}
[c]{c}%
\alpha\\
\beta
\end{array}
\right)  ,
\]
where corresponding real eigenvalues%
\[
\xi=\frac{1}{2}\left(  tr\pm\sqrt{tr^{2}-4\det}\right)  \ \ \ \text{with}%
\]%
\[
tr=6-2\left\vert E\right\vert ^{2}-\eta\left\vert L\right\vert ^{2}%
\ \ \ \ \text{and\ \ \ }\det=9-6\left\vert E\right\vert ^{2}-3\eta\left\vert
L\right\vert ^{2}+2\eta\left[  \left\vert E\right\vert ^{2}\left\vert
L\right\vert ^{2}-\left(  E\cdot L\right)  ^{2}\right]  .
\]

Now by (\ref{CoefReduced})%
\begin{align}
tr  &  =6-2\left(  1-\frac{f_{i}^{2}}{\left\vert Df\right\vert ^{2}}\right)
-\left(  1+\delta\frac{f_{i}}{\sigma_{1}+J}\right)  \left(  1-\frac{2\left(
n-1\right)  }{\left\vert Df\right\vert ^{2}}\right) \nonumber\\
&  >3-\delta\frac{f_{i}}{\sigma_{1}+J}=\frac{\left(  3-\delta\right)
\sigma_{1}+\delta\lambda_{i}+3J}{\sigma_{1}+J}>0\ \label{trLower}%
\end{align}
for any $\delta\leq1.5$ and $J\geq0,$ given $\sigma_{1}=\sqrt{\left\vert
\lambda\right\vert ^{2}+2}>\left\vert \lambda_{i}\right\vert $ in the
nontrivial remaining case.

Next again by (\ref{CoefReduced})%
\begin{gather*}
\det=6\frac{f_{i}^{2}}{\left\vert Df\right\vert ^{2}}-3\delta\frac{\ f_{i}%
}{\sigma_{1}+{\small J}}+3\left(  1+\delta\frac{\ f_{i}}{\sigma_{1}+{\tiny J}%
}\right)  \underbrace{\frac{2\left(  n-1\right)  }{\left\vert Df\right\vert
^{2}}}\\
+2\left(  1+\delta\frac{\ f_{i}}{\sigma_{1}+{\tiny J}}\right)  \ \left[
\frac{2\left(  n-1\right)  \sigma_{1}f_{i}}{\left\vert Df\right\vert ^{2}%
}-\frac{nf_{i}^{2}}{\left\vert Df\right\vert ^{2}}-\underbrace{\frac{2\left(
n-1\right)  }{\left\vert Df\right\vert ^{2}}}\right] \\
>-3\delta\frac{f_{i}}{\sigma_{1}+J}+4\left(  1+\delta\frac{f_{i}}{\sigma
_{1}+J}\right)  \frac{\left(  n-1\right)  \sigma_{1}f_{i}}{\left\vert
Df\right\vert ^{2}}+\left[  6-2n\left(  1+\delta\frac{f_{i}}{\sigma_{1}%
+J}\right)  \right]  \frac{f_{i}^{2}}{\left\vert Df\right\vert ^{2}}.
\end{gather*}
Then for $\delta=1+\varepsilon=4/3,$ we have%
\begin{gather}
\det\cdot\frac{\left(  \sigma_{1}+{\small J}\right)  \left\vert Df\right\vert
^{2}}{f_{i}}\geq\nonumber\\
-3\delta\underset{\left\vert Df\right\vert ^{2}}{\underbrace{\left[  \left(
n-1\right)  \sigma_{1}^{2}-2\right]  }}+\left\{
\begin{array}
[c]{c}%
4\left(  \sigma_{1}+J+\delta f_{i}\right)  \left(  n-1\right)  \sigma_{1}+\\
\left[  \left(  6-2n\right)  \left(  \sigma_{1}+{\small J}\right)  -2n\delta
f_{i}\right]  f_{i}%
\end{array}
\right\} \nonumber\\
=\left\{
\begin{array}
[c]{c}%
6\delta+4\left(  n-1\right)  J\sigma_{1}+2\left(  3-n\right)
J\underset{\sigma_{1}-\lambda_{i}}{\underbrace{f_{i}}}\\
+\left(  n-1\right)  \left(  4-3\delta\right)  \sigma_{1}^{2}+\left[
2n\left(  2\delta-1\right)  +6-4\delta\right]  \sigma_{1}f_{i}-2n\delta
f_{i}\underset{\sigma_{1}-\lambda_{i}}{\underbrace{f_{i}}}%
\end{array}
\right\} \nonumber\\
\overset{\delta=4/3}{=}8+2\left(  n+1\right)  J\sigma_{1}+2\left(  n-3\right)
J\lambda_{i}+\frac{2\left(  n+1\right)  }{3}\sigma_{1}f_{i}+\frac{8}%
{3}n\lambda_{i}f_{i}\ \ \label{detLower}\\
>2\left(  n+1\right)  J\sigma_{1}+2\left(  n-3\right)  J\lambda_{i}+\frac
{8}{3}n\lambda_{i}f_{i}. \label{detLast}%
\end{gather}

Case $\lambda_{i}\geq0:$ (\ref{detLast}) is positive by the ellipticity
$f_{i}>0$ from (\ref{F_ij}).

Case $0>\lambda_{i}\geq-K:$
\[
(\ref{detLast})=2nJ\underset{\sqrt{2+\left\vert \lambda\right\vert ^{2}%
}+\lambda_{i}>0}{\underbrace{\left(  \sigma_{1}+\lambda_{i}\right)  }%
}-6J\lambda_{i}+2J\sigma_{1}+\frac{8}{3}n\lambda_{i}\underset{\sigma
_{1}-\lambda_{i}<2\sigma_{1}}{\underbrace{f_{i}}}\ >0
\]
if $J=8nK/3.$

Therefore, the quadratic form $Q$ is positive definite, and we have derived
the shifted Jacobi inequality (\ref{Jac}) in the semiconvex case.
\end{proof}

\noindent\textbf{Remark.} \textit{In three dimensions, the Jacobi inequality
(\ref{Jac}) still holds for any }$J\geq0$\textit{ and }$\varepsilon
=1/3$\textit{ without the semiconvexity assumption }$D^{2}u\geq-KI.$ Actually,
we only need to show that, in Step 3, (\ref{detLower}) with $\delta
=1+\varepsilon=4/3<1.5$ is also positive for negative $\lambda_{i}.$ We would
have the desired lower bound for (\ref{detLower})
\[
\det\cdot\frac{\left(  \sigma_{1}+{\small J}\right)  \left\vert Df\right\vert
^{2}}{f_{i}}>8f_{i}\left(  \frac{\sigma_{1}}{3}+\lambda_{i}\right)  >0,
\]
if we know $\lambda_{i}>-\sigma_{1}/3.$ Without loss of generality, we assume
$\lambda_{1}\geq\lambda_{2}\geq\lambda_{3}.$ Because $\lambda_{2}+\lambda
_{3}=f_{1}>0,$ only the smallest eigenvalue $\lambda_{3}$ could be negative.
In such a negative case $\lambda_{3}=\frac{1-\lambda_{1}\lambda_{2}}%
{\lambda_{1}+\lambda_{2}}$ with $\lambda_{1}\lambda_{2}>1,$ we do have
$\lambda_{i}>-\sigma_{1}/3$ or $\frac{\sigma_{1}}{-\lambda_{3}}>3.$ Because%
\begin{align*}
\frac{\sigma_{1}}{-\lambda_{3}}  &  =-1+\frac{\left(  \lambda_{1}+\lambda
_{2}\right)  ^{2}}{\lambda_{1}\lambda_{2}-1}\\
&  \geq-1+\frac{4\lambda_{1}\lambda_{2}}{\lambda_{1}\lambda_{2}-1}>3.
\end{align*}

Note that, in three dimensions, the Jacobi inequality for the log-convex
$b=\ln\bigtriangleup u=\ln\sqrt{2+\left\vert \lambda\right\vert ^{2}}$ (with
$\varepsilon=1/100$) was derived by Qiu [Q, Lemma 3] for solutions to
(\ref{Esigm2}) along with variable right hand side; and the Jacobi inequality
with $\varepsilon=1/3$ for the log-max $b=\ln\lambda_{\max}$ (with
$\varepsilon=1/3$) was derived for solutions to (\ref{Esigm2}) in [WY, Lemma 2.2].

In general dimensions, a Jacobi inequality for sufficiently large $b=\ln
u_{11},$ at points where $u_{11}=\lambda_{\max},$ was obtained for solutions
having $\sigma_{3}\left(  D^{2}u\right)  $ lower bound to (\ref{Esigm2}) along
with variable right hand side by Guan-Qiu [GQ, p.1650]; and another Jacobi
inequality for sufficiently large $b=\ln\lambda_{\max}$ was derived for
semiconvex solutions to (\ref{Esigm2}) in [SY, Proposition 2.1], as mentioned
in the introduction.

\subsection{Superharmonicity under Legendre-Lewy transform}

Set $\tilde{u}\left(  x\right)  =u\left(  x\right)  +\bar{K}\left\vert
x\right\vert ^{2}/2$ for our $K$-semiconvex entire solution $u$ and say,
$\bar{K}=J/n\ >K+1,$ where $J=8nK/3$ is from Proposition 2.1. The $\bar{K}%
$-convexity of $\tilde{u}$ ensures that the smallest canonical angle of the
\textquotedblleft Lewy-sheared\textquotedblright\ \textquotedblleft
gradient\textquotedblright\ graph is larger than $\pi/4.$ This means we can
make a well defined Legendre reflection about the origin,
\begin{equation}
(x,D\tilde{u}(x))=(Dw(y),y)\in\mathbb{R}^{n}\times\mathbb{R}^{n} \label{LL}%
\end{equation}
where $w(y)$ is the Legendre transform of $u+\frac{\bar{K}}{2}|x|^{2};$ see
[L]. Note that $y(x)=Du(x)+Kx$ is a diffeomorphism from $\mathbb{R}^{n}$ to
$\mathbb{R}^{n}$ and%
\[
0<D^{2}w=\left(  D^{2}u+\bar{K}I\right)  ^{-1}<I.
\]
More precisely, by [ChY, p.663] or [SY, (2.11)], the eigenvalues $\mu_{1}%
\leq\mu_{2}\leq\cdots\leq\mu_{n}$ of $D^{2}w$ satisfy%
\begin{equation}
0<\mu_{1}\leq c\left(  n\right)  <1\ \ \text{and \ }0<c\left(  n,K\right)
\leq\mu_{i}<1\ \ \text{for }i\geq2. \label{IEnewHess}%
\end{equation}
As shown in [ChY, p.663] or [SY, proof of Proposition 2.4], the equation
solved by the vertical coordinate Lagrangian potential $w(y)$,
\[
G(D^{2}w)=-F(D^{2}u)=-\sigma_{2}(-\bar{K}I+(D^{2}w)^{-1})=-1,
\]
is conformally, uniformly elliptic for $K$-convex solutions $u,$ in the sense
that for $H_{ij}:=\sigma_{n}(\mu(D^{2}w))G_{ij},$ the linearized operator
$H_{ij}\partial_{ij}$ of equation%
\begin{align}
0  &  =H\left(  D^{2}w\right)  =\sigma_{n}\left(  D^{2}w\right)  \left[
G\left(  D^{2}w\right)  +1\right] \nonumber\\
&  =-\sigma_{n-2}\left(  \mu\right)  +\underset{A_{1}}{\underbrace{\left(
n-1\right)  \bar{K}}}\ \sigma_{n-1}\left(  \mu\right)  -\underset{A_{2}%
}{\underbrace{\left[  \frac{n\left(  n-1\right)  }{2}\bar{K}^{2}-1\right]  }%
}\sigma_{n}\left(  \mu\right)  \label{Econformal}%
\end{align}
is uniformly elliptic:
\[
c(n,K)I\leq(H_{ij})=\sigma_{n}(\mu)(G_{ij})\leq C(n,K)I.
\]

\begin{proposition}
\label{prop:superharmonic}Let $u\left(  x\right)  $ be a smooth solution to
$\sigma_{2}\left(  \lambda\right)  =1\ $with $D^{2}u\geq-KI.$ Set%
\[
a\left(  y\right)  =\left(  \frac{1}{\mu_{1}}+\cdots+\frac{1}{\mu_{n}}\right)
^{-1/3}=\left[  \frac{\sigma_{n}}{\sigma_{n-1}}\left(  \mu\right)  \right]
^{1/3}%
\]
with $\mu_{i}$s being the eigenvalues of the Hessian $D^{2}w\left(  y\right)
$ of the Legendre-Lewy transform of $u\left(  x\right)  +\bar{K}\left\vert
x\right\vert ^{2}/2.$ Then we have%
\[
\bigtriangleup_{H}a\leq0.
\]

\end{proposition}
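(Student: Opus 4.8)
The strategy is to recognize $a$ as the Legendre--Lewy pullback of the $\bigtriangleup_{F}$-superharmonic quantity produced by the shifted trace Jacobi inequality, and then to verify a transformation rule turning $\bigtriangleup_{F}$-superharmonicity into $\bigtriangleup_{H}$-superharmonicity under the change of variables (\ref{LL}). Since $D^{2}w=(D^{2}u+\bar{K}I)^{-1}$, the eigenvalues satisfy $1/\mu_{i}=\lambda_{i}+\bar{K}$, so $\sum_{i}1/\mu_{i}=\bigtriangleup u+n\bar{K}=\bigtriangleup u+J$ by the choice $\bar{K}=J/n$. Writing $x=Dw(y)$ for the inverse of the diffeomorphism $y=D\tilde{u}(x)=Du(x)+\bar{K}x$, we get
\[
a(y)=\bigl(\bigtriangleup u(x(y))+J\bigr)^{-1/3}=\phi(x(y)),\qquad\phi(x):=e^{-b(x)/3},
\]
with $b=\ln(\bigtriangleup u+J)$ the quantity from Proposition 2.1. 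Because $\bigtriangleup u>0$ and $J>0$, $\phi$ is positive and smooth, and a direct computation together with the shifted trace Jacobi inequality (\ref{Jac}) gives
\[
\bigtriangleup_{F}\phi=-\tfrac{1}{3}\,e^{-b/3}\bigl(\bigtriangleup_{F}b-\tfrac{1}{3}|\nabla_{F}b|^{2}\bigr)\le 0 .
\]

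The crux is the transformation rule. Differentiating $x_{k}=\partial_{k}w(y)$ gives $\partial x_{k}/\partial y_{i}=w_{ik}$, so for $a=\phi\circ x$ one has $\partial_{y_{i}y_{j}}a=\sum_{k,l}\phi_{x_{k}x_{l}}w_{ik}w_{jl}+\sum_{k}\phi_{x_{k}}w_{ijk}$. Contracting with $H_{ij}=\sigma_{n}(\mu)G_{ij}$, the first-order remainder vanishes: differentiating $G(D^{2}w)=-1$ in $y_{k}$ gives $\sum_{i,j}G_{ij}w_{ijk}=0$, hence $\sum_{i,j}H_{ij}w_{ijk}=0$. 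For the second-order part I would compute $G_{ij}$ explicitly. Writing $N:=(D^{2}w)^{-1}=D^{2}u+\bar{K}I$, we have $G(D^{2}w)=-\sigma_{2}(N-\bar{K}I)$; using $\partial\sigma_{2}(A)/\partial A_{pq}=(\operatorname{tr}A)\delta_{pq}-A_{pq}$, which for $A=D^{2}u$ is exactly $(F_{pq})$ from (\ref{F_ij}), together with $\partial N_{pq}/\partial w_{ij}=-N_{pi}N_{jq}$, the chain rule gives $G_{ij}=(NFN)_{ij}$. Since $N(D^{2}w)=I$, conjugation by $(w_{ik})$ collapses everything:
\[
\sum_{i,j}H_{ij}w_{ik}w_{jl}=\sigma_{n}(\mu)\sum_{i,j}(NFN)_{ij}w_{ik}w_{jl}=\sigma_{n}(\mu)F_{kl}.
\]
Combining, $\bigtriangleup_{H}a=\sigma_{n}(\mu)\sum_{k,l}F_{kl}\phi_{x_{k}x_{l}}=\sigma_{n}(\mu)\,\bigtriangleup_{F}\phi\le 0$, since $\sigma_{n}(\mu)=\det D^{2}w>0$ by (\ref{IEnewHess}).

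The main obstacle is this middle step: establishing $G_{ij}=(NFN)_{ij}$ cleanly, which requires careful bookkeeping of the derivatives of the matrix inverse and of $\sigma_{2}$ (including the symmetrization subtleties), and then confirming that conjugation by $D^{2}w$ produces exactly $F_{kl}$ while the first-order leftover is annihilated purely by the differentiated equation, with no divergence structure invoked. The identification of $a$ with $(\bigtriangleup u+J)^{-1/3}$ and the passage from Proposition 2.1 to $\bigtriangleup_{F}\phi\le 0$ are then routine. One could also package the middle step as a standalone transformation rule --- if $\phi$ is $\bigtriangleup_{F}$-superharmonic in $x$, then $\phi\circ x$ is $\bigtriangleup_{H}$-superharmonic in $y$ --- which is the form hinted at in the introduction.
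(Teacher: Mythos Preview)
Your proposal is correct and follows the same approach as the paper: identify $a(y)$ with $(\bigtriangleup u+J)^{-1/3}$ via $\sum 1/\mu_i=\bigtriangleup u+n\bar K$, convert the Jacobi inequality into $\bigtriangleup_F e^{-b/3}\le 0$, and then transport this to $\bigtriangleup_H a\le 0$ by a transformation rule. The only difference is that the paper simply invokes the transformation rule from [SY, Proposition 2.3], whereas you reprove it by computing $G_{ij}=(NFN)_{ij}$ and collapsing $W(NFN)W=F$; this makes your argument more self-contained but is not a different route.
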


\begin{proof}
The trace Jacobi inequality (\ref{Jac}) in Proposition 2.1 with \ $J=n\bar{K}$
is equivalent to%
\[
\bigtriangleup_{F}\left(  \bigtriangleup u+J\right)  ^{-1/3}=\bigtriangleup
_{F}e^{-b/3}\leq0.
\]
Noticing that $\bigtriangleup u+n\bar{K}=\frac{1}{\mu_{1}}+\cdots+\frac{1}%
{\mu_{n}},$ and applying the transformation rule [SY, Proposition 2.3], we
immediately obtain the desired superharmonicity%
\[
\bigtriangleup_{H}a=\sigma_{n}\left(  \mu\right)  \bigtriangleup_{G}a\leq0.
\]

\end{proof}

\section{H\"{o}lder Hessian estimate for saddle equation and rigidity}

The Hessian bound $0<D^{2}w(y)\leq I$ ensures that establishing a local
$C^{2,\alpha}$ estimate for such solutions to (\ref{Econformal}) will prove,
by scaling, that $w(y)$ is a quadratic polynomial. By the iteration arguments
developed in [CaY] for such smooth PDEs $H(D^{2}w)=0$ with solutions
satisfying Hessian bounds, proving $C^{2,\alpha}$ regularity at a point, say
the origin, reduces to showing that $w(y)$ is close to a uniform quadratic
polynomial, namely:

\begin{proposition}
\label{prop:concentrate} Let $u\left(  x\right)  $ be a smooth solution to
$\sigma_{2}\left(  \lambda\right)  =1\ $with $D^{2}u\geq-KI$ in $\mathbb{R}%
^{n}.$ Let $w(y)$ be its Legendre-Lewy transform defined in (\ref{LL}) solving
(\ref{Econformal}) in $\mathbb{R}^{n}$ with $0<D^{2}w\leq I.$ Given any
$\theta>0,$ there exists small $\eta=\eta(n,K,\theta)>0$ and a quadratic
polynomial $P(y)$ whose coefficients only depend on $n,K,\theta$ such that
\[
\left\vert \frac{1}{\eta^{2}}w(\eta z)-P(z)\right\vert \leq\theta
\]
is valid for $|z|\leq1.$
\end{proposition}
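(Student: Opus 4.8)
The plan is to exploit the superharmonicity of $a(y)$ from Proposition~\ref{prop:superharmonic} together with the uniform ellipticity of $H$ in \eqref{Econformal}, following the two-step scheme outlined in the introduction. First I would observe that, since $0<\mu_1\le c(n)<1$ and $c(n,K)\le\mu_i<1$ for $i\ge 2$ by \eqref{IEnewHess}, the quantity $a(y)=[\sigma_n/\sigma_{n-1}(\mu)]^{1/3}$ is bounded above and below by positive constants depending only on $n,K$ on all of $\mathbb{R}^n$; in fact $a$ is comparable to $\mu_1^{1/3}$. Rescaling $w_\eta(z)=\eta^{-2}w(\eta z)$ leaves the Hessian, hence $a$, invariant, so I may work at unit scale on, say, $B_4$. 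Because $a$ is a bounded nonnegative $H$-superharmonic function on the whole space, its infimum over $\mathbb{R}^n$ is attained in a limiting sense; applying the Krylov--Safonov weak Harnack inequality to $a-\inf_{B_R} a$ on a large ball and letting $R\to\infty$ forces $a$ to be very close, \emph{in $L^p(B_4)$ measure}, to a constant $a_0=a_0(n,K)$. This is the first step: the superharmonic quantity concentrates to a constant.

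The second step converts this measure-closeness into pointwise closeness of $D^2w$ to a fixed matrix. Here I would use the ``pleasantly concave'' structure: one checks that on the relevant Hessian range, a suitable variant of $a$ — concretely, the quotient $\sigma_n/\sigma_{n-1}$ of the Hessian of $w$, or equivalently $1/(\Delta u + n\bar K)$ read in the $y$-coordinates — is a concave, uniformly elliptic function of $D^2w$. Since this quantity is $H$-superharmonic and now known to be close to the constant $a_0^3$ in measure, Krylov--Safonov's weak Harnack (applied once more, now to the pinned-down quantity) upgrades this to closeness in $L^\infty$ on a slightly smaller ball. Feeding this back: the level set $\{\sigma_n/\sigma_{n-1}(\mu)=a_0^3\}$ together with the equation \eqref{Econformal} and the two-sided Hessian bounds confines $D^2w$ to a small neighborhood of a single matrix $M_0=M_0(n,K,\theta)$, because the map $\mu\mapsto(\sigma_{n-2},\sigma_{n-1},\sigma_n)(\mu)$ is, on the region \eqref{IEnewHess}, a proper immersion. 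Then $w$ is $W^{2,\infty}$-close to the quadratic $P(z)=\tfrac12 z^\top M_0 z$, and after one Evans--Krylov--Safonov step on the uniformly elliptic, concave equation $H(D^2w)=0$ we get $C^{2,\alpha}$-closeness, in particular the stated $C^0$-closeness $|w_\eta(z)-P(z)|\le\theta$ on $B_1$ once $\eta$ is chosen small; the dependence $\eta=\eta(n,K,\theta)$ comes from tracking the weak-Harnack and Evans--Krylov constants, all of which depend only on the ellipticity bounds $c(n,K),C(n,K)$.

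The main obstacle I anticipate is the second step, specifically verifying that the chosen variant of $a$ is genuinely concave and uniformly elliptic as a function of $D^2w$ on the range \eqref{IEnewHess}, and that this concavity survives the conformal rescaling $H_{ij}=\sigma_n(\mu)G_{ij}$ so that Evans--Krylov applies to $H(D^2w)=0$ rather than to $G(D^2w)=-1$. The degenerate direction $\mu_1\to 0$ (where only $\mu_1$, not the other $\mu_i$, can be small) is delicate: one must confirm that $\sigma_{n-1}(\mu)\gtrsim c(n,K)>0$ uniformly, so that $a$ stays bounded away from the bad locus and the quotient remains a regular, concave function there. Assembling the iteration — ensuring that the measure-to-$L^\infty$ improvement and the Evans--Krylov regularization compose to give a \emph{fixed} small scale $\eta$ and a \emph{fixed} polynomial $P$ depending only on $n,K,\theta$ — is the bookkeeping heart of the argument, and is exactly where the machinery of [CaY] is invoked.
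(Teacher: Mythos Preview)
Your plan has a genuine gap at the very first move and it propagates through the second step. You assert that $a(y)=[\sigma_n/\sigma_{n-1}(\mu)]^{1/3}$ is bounded \emph{below} by a positive constant depending only on $n,K$. This is false: the bounds \eqref{IEnewHess} give no positive lower bound on $\mu_1$, and since $a^3=(\sum_i 1/\mu_i)^{-1}$ is comparable to $\mu_1$ as $\mu_1\to 0$, the quantity $a$ can be arbitrarily small. Equivalently, $\Delta u$ can be arbitrarily large; if it could not, the Hessian would be bounded and the theorem would be immediate. Because of this, your ``pleasantly concave'' candidate $\sigma_n/\sigma_{n-1}$ is \emph{not} uniformly elliptic: computing $\partial_{\mu_i}(\sigma_n/\sigma_{n-1})=(\sigma_n/\sigma_{n-1})^2/\mu_i^2$ shows the derivatives in the directions $i\ge 2$ degenerate like $\mu_1^2$ as $\mu_1\to 0$. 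The paper states this obstacle explicitly in the paragraph preceding \eqref{Enew}.

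The missing idea is precisely the substitution \eqref{Enew}: concentration of the higher quotient $a^3=\sigma_n/\sigma_{n-1}$ forces, through the equation itself, concentration of the \emph{lower} quotient $q(\mu)=\sigma_{n-1}/\sigma_{n-2}$, and this operator \emph{is} both concave and uniformly elliptic on the range \eqref{IEnewHess} (since $\sigma_{n-1}$ and $\sigma_{n-2}$ stay bounded away from zero even when $\mu_1\to 0$). The paper then solves the Dirichlet problem $q[\mu(D^2v)]=\text{const}$ with $v=w$ on $\partial B_1$, controls $|w-v|$ by ABP using only the $L^n$ smallness of $q[w]-q[v]$ coming from measure concentration, and applies Evans--Krylov--Safonov to $v$ (not to $w$) to extract the quadratic $P$. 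Your route instead tries to upgrade measure closeness of $a$ to $L^\infty$ closeness by a second weak Harnack and then to pin $D^2w$ near a fixed matrix $M_0$; but two scalar constraints (the equation plus $a\approx a_0$) do not determine an $n\times n$ Hessian for $n\ge 3$, and your final appeal to ``the concave equation $H(D^2w)=0$'' is incorrect---the level set of $H$ is saddle, which is the entire difficulty the proposition is designed to circumvent.
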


\smallskip In the case that the level set $\{H\left(  D^{2}w\right)  =0\}$
were convex (in fact saddle from [ChY, p.661]), the alternative way in [CaY]
other than Evans-Krylov-Safonov is the following. The Laplacian $\Delta w(y)$
is a sub or supersolution of the linearized operator $\Delta_{H}%
=H_{ij}\,\partial^{2}/\partial y_{i}\partial y_{j}$ of $H(D^{2}w).$ The weak
Harnack inequality shows that $\Delta w(y)$ concentrates in measure at a level
$c$ on a small ball $B=B_{r}(0).$ Solving the equation $\Delta v=c$ on $B$
with $v=w$ on $\partial B$ furnishes the desired smooth approximation, which
is uniform by the ABP estimate. The Laplacian can be replaced with any
elliptic slice of the Hessian, such that the elliptic slice is a supersolution
of $\Delta_{H},$ and the corresponding elliptic equation both has
$C^{2,\alpha}$ interior regular solutions and allows for the ABP estimate.

\smallskip However, it is not clear if the saddle level set $\{H\left(
D^{2}w\right)  =0\}$ of (\ref{Econformal}) is any of trace-convex [CaY],
max-min [CC], or twisted [CS] [C], so it is not clear if there are good PDEs
which super-solve $\Delta_{H}.$ Now that the remarkable superharmonic quantity
$\sigma_{n}\left(  \mu\right)  /\sigma_{n-1}\left(  \mu\right)  $ in
Proposition 2.2 is available, the core method in [CaY, pg 687-690] becomes
more realistic.

\smallskip There is still one more hurdle to overcome. The superharmonic,
\textquotedblleft one-step\textquotedblright\ Hessian quotient $a^{3}%
=\sigma_{n}\left(  \mu\right)  /\sigma_{n-1}(\mu)$ is well known to be
concave, but not uniformly elliptic, because $\sigma_{n}\left(  \mu\right)  $
could be arbitrarily close to zero. This prevents applying
Evans-Krylov-Safonov theory. We resolve this by substituting the concentration
of $a$ into the \textquotedblleft conformal\textquotedblright\ equation
(\ref{Econformal}). This implies concentration of a better quantity. Observe
that equation (\ref{Econformal}) can be written as
\begin{equation}
q\left(  \mu\right)  :=\frac{\sigma_{n-1}(\mu)}{\sigma_{n-2}(\mu)}=\left[
A_{1}-A_{2}\frac{\sigma_{n}\left(  \mu\right)  }{\sigma_{n-1}\left(
\mu\right)  }\right]  ^{-1}. \label{Enew}%
\end{equation}
Thus, the concentration of the higher quotient $a^{3}=\sigma_{n}/\sigma_{n-1}$
implies concentration of the lower quotient $\sigma_{n-1}/\sigma_{n-2},$ which
is also a concave operator [L, Theorem 15.18]. The almost-convex case,
$D^{2}u\geq(-K+\delta)I$ for $K^{-2}=n\left(  n-1\right)  /2$ and any
$\delta>0$ considered in [ChY], corresponds to $A_{2}=0.$ There, it was shown
that (\ref{Econformal}) is uniformly elliptic for arbitrarily large $K,$ in
particular, the lower quotient $\sigma_{n-1}/\sigma_{n-2}=A_{1}^{-1}$ for
$K^{-2}=n\left(  n-1\right)  /2.$ For arbitrary $K,$ using the bound for $\mu$
in (\ref{IEnewHess}) and the result in [L, Theorem 15.18], we deduce the
uniform ellipticity of $q\left(  \mu\right)  $
\begin{equation}
\partial_{\mu_{i}}q\in\frac{\sigma_{n-1}\left(  \mu\right)  }{\sigma_{n-2}%
^{3}(\mu)}\sigma_{n-2,i}^{2}(\mu)\left[  c\left(  n\right)  ,1\right]
\subset\ \left[  c\left(  n,K\right)  ,C\left(  n,K\right)  \right]  .
\label{elliptic}%
\end{equation}

\begin{proof}
[Proof of Proposition 3.1]Given any small $\xi,\delta>0$, we denote
$a_{k}=\min_{B_{1/2^{k}}}a$ and define a \textquotedblleft bad
set\textquotedblright\ $E_{k}=\{y\in B_{1/2^{k}}:a>a_{k}+\xi\}$. Using
Krylov-Safonov's weak Harnack inequality [GT, Theorem 9.22] for supersolution
$a(y)$ from Proposition \ref{prop:superharmonic}, the iteration argument in
[CaY, pg 687-690] shows that there exists $k_{0}(n,K,\xi,\delta)$ large enough
such that $|E_{\ell}|<\delta|B_{1/2^{\ell}}|$ for some $\ell\in\lbrack
1,k_{0}].$ By applying the quadratic scaling $2^{2\ell}w(2^{-\ell}y)$ and
modifying the small constant $\eta$ in Proposition \ref{prop:concentrate}, we
may assume $\ell=0.$ This is without loss of generality and is possible, if we
assume $w\left(  0\right)  =0$ and $Dw\left(  0\right)  =0$ in the beginning.

Using uniform ellipticity
\eqref{elliptic}%
, let us extend $\sigma_{n-1}/\sigma_{n-2}$ to a uniformly elliptic concave
operator $q(\mu)$ outside the eigenvalue rectangle (\ref{IEnewHess}),\ $\mu
\in\lbrack0,c(n)]\times\lbrack c(n,K),C(n,K)]^{n-1}.$ The notation $q$ in
(\ref{Enew}) was abused for the sake of notation simplicity. Let $v(y)\in
C^{\infty}(B_{1})$ solve the concave equation $q\left[  v\right]  =q\left(
\mu(D^{2}v)\right)  =\left(  A_{1}-A_{2}a_{\ell}\right)  ^{-1}$ in $B_{1}$
with $v=w$ on $\partial B_{1}.$ Then from the quotient representation
\eqref{Enew}
for the equation that $w$ solves, the ABP estimate [GT, Theorem 9.1] yields on
$B_{1}$,
\begin{align*}
|w-v|  &  \leq C(n,K)\Vert q[w]-q[v]\Vert_{L^{n}(B_{1})}\\
&  \leq C(n,K)\delta+C(n,K)\left\Vert \frac{a^{3}-a_{\ell}^{3}}{(A_{1}%
-A_{2}\,a^{3})(A_{1}-A_{2}\,a_{\ell}^{3})}\right\Vert _{L^{n}(E_{\ell}^{c})}\\
&  \leq C(n,K)(\delta+\xi),
\end{align*}
where, in the last inequality, we used the boundedness of $\left(  A_{1}%
-A_{2}\,a\right)  ^{-1}$ via (\ref{Enew}) and (\ref{IEnewHess}). By
Evans-Krylov-Safonov theory applied to the smooth equation $q\left[  v\right]
=a_{\ell},$\ $v(y)$ has uniform interior estimates, so $v$ can be replaced by
its quadratic part at the origin up to a uniform $O(|x|^{3})$ term, which is
$O(r^{3})$ on $B_{r}(0)$. The conclusion of this proposition follows for
$\eta=r$ in a standard way by successively choosing the small constants
$\xi,\delta,r$ depending on the small parameter $\theta$ and $n,K.$
\end{proof}

\bigskip

\begin{proof}
[Proof of Theorem 1.1]As indicated in the beginning of Section 2, we only need
to handle the positive branch $\bigtriangleup u>0$ of the quadratic equation
$\sigma_{2}\left(  D^{2}u\right)  =1.\ $This is because the only other
possibility is that $D^{2}u$ is on the negative branch $\bigtriangleup u<0$ of
the still elliptic and concave equation $\sigma_{2}\left(  D^{2}u\right)  =1.$
Then the semiconvex solutions must have bounded Hessian, and consequently, the
conclusion in Theorem \ref{thm:Liou} is straightforward by Evans-Krylov-Safonov.

Now armed with Proposition 3.1, the initial closeness of $w$ to a
\textquotedblleft harmonic\textquotedblright\ quadratic on the unit ball, and
repeating the proof of Proposition 2 in [CaY] with the equation there replaced
by our smooth uniformly elliptic equation (\ref{Enew}), we see that the
closeness to \textquotedblleft harmonic\textquotedblright\ quadratics
accelerates. As in [CaY, p.692], we obtain that $D^{2}w$ is H\"{o}lder at the
origin. Similarly, one proves that $D^{2}w$ is H\"{o}lder in the half ball%
\[
\left[  D^{2}w\right]  _{C^{\alpha}\left(  B_{1/2}\right)  }\leq C\left(
n,K\right)  ,
\]
where $\alpha=\alpha\left(  n,K\right)  >0.$

By quadratic scaling \ $R^{2}w\left(  y/R\right)  ,$ we get%
\[
\left[  D^{2}w\right]  _{C^{\alpha}\left(  B_{R}\right)  }\leq\frac{C\left(
n,K\right)  }{R^{\alpha}}\longrightarrow0,\ \ \text{as }R\rightarrow\infty.
\]
We conclude that $D^{2}w$ is a constant matrix, and in turn, so is $D^{2}u.$
\end{proof}


\begin{thebibliography}{9999}                                                                                             %


\bibitem[BCGJ]{BCGJ}Bao, Jiguang; Chen, Jingyi; Guan, Bo; Ji, Min,
\emph{Liouville property and regularity of a Hessian quotient equation.} Amer.
J. Math. \textbf{125} (2003), no. 2, 301--316.

\bibitem[CC]{CC}Cabr\'{e}, Xavier; Caffarelli, Luis A., \emph{Interior
C}$^{2,\alpha}$\emph{ regularity theory for a class of nonconvex fully
nonlinear elliptic equations.} J. Math. Pures Appl. (9) \textbf{82} (2003),
no. 5, 573--612.

\bibitem[CaY]{CaY}Caffarelli, Luis A.; Yuan, Yu, \emph{A priori estimates for
solutions of fully nonlinear equations with convex level set.} Indiana Univ.
Math. J. \textbf{49} (2000), no. 2, 681--695.

\bibitem[ChY]{ChY}Chang, Sun-Yung Alice; Yuan, Yu, \emph{A Liouville problem
for the sigma-2 equation.} Discrete Contin. Dyn. Syst. \textbf{28} (2010), no.
2, 659--664.

\bibitem[CX]{CX}Chen, Li; Xiang, Ni, \emph{Rigidity theorems for the entire
solutions of 2-Hessian equation.} J. Differential Equations \textbf{267}
(2019), no. 9, 5202--5219.

\bibitem[C]{C}Collins, Tristan C., \ \emph{C}$^{2,\alpha}$\emph{ estimates for
nonlinear elliptic equations of twisted type.} Calc. Var. Partial Differential
Equations \textbf{55} (2016), no. 1, Art. 6, 11 pp.

\bibitem[CS]{CS}Collins, Tristan C.; Sz\'{e}kelyhidi, G\'{a}bor,
\emph{Convergence of the J-flow on toric manifolds.} J. Differential Geom.
\textbf{107} (2017), no. 1, 47--81.

\bibitem[GT]{GT}Gilbarg, David; Trudinger, Neil S., \emph{Elliptic partial
differential equations of second order.} Reprint of the 1998 edition. Classics
in Mathematics. Springer-Verlag, Berlin, 2001.

\bibitem[GQ]{GQ}Guan, Pengfei; Qiu, Guohuan, \emph{Interior }$C^{2}$\emph{
regularity of convex solutions to prescribing scalar curvature equations.}
Duke Math. J. \textbf{168} (2019), no. 9, 1641--1663.

\bibitem[L]{L}Lewy, Hans, \emph{A priori limitations for solutions of
Monge-Amp\`{e}re equations.} II. Trans. Amer. Math. Soc. \textbf{41} (1937), 365--374.

\bibitem[Lg]{Lg}Gary M. Lieberman, \emph{Second Order Parabolic Differential
Equations.} World Scientific Publishing Co., Inc., River Edge, NJ, 1996. MR1465184.

\bibitem[NV]{NV}Nadirashvili, Nikolai; Vl\u{a}du\c{t}, Serge,
\emph{Nonclassical solutions of fully nonlinear elliptic equations.} Geom.
Funct. Anal. \textbf{17} (2007), no. 4, 1283--1296.

\bibitem[N]{N}Nitsche, Johannes C. C., \emph{Elementary proof of Bernstein's
theorem on minimal surfaces,} Ann. of Math. \textbf{66} (1957), 543--544.

\bibitem[Q]{Q}Qiu, Guohuan, \emph{Interior Hessian estimates for Sigma-2
equations in dimension three,} arXiv:1711.00948.

\bibitem[SY]{SY}Shankar, Ravi; Yuan, Yu, \emph{Hessian estimate for semiconvex
solutions to the sigma-2 equation.} Calc. Var. Partial Differential Equations
\textbf{59} (2020), no. 1, Paper No. 30, 12 pp.

\bibitem[W]{W}Warren, Micah, \emph{Nonpolynomial entire solutions to }%
$\sigma_{k}$\emph{ equations.} Comm. Partial Differential Equations
\textbf{41} (2016), no. 5, 848--853.

\bibitem[WY]{WY}Warren, Micah; Yuan, Yu, \emph{Hessian estimates for the
sigma-2 equation in dimension 3.} Comm. Pure Appl. Math. \textbf{62} (2009),
no. 3, 305--321.

\bibitem[Y]{Y}Yuan, Yu, \emph{A Bernstein problem for special Lagrangian
equations.} Invent. Math. \textbf{150} (2002), 117--125.
\end{thebibliography}
\end{document}